\numberwithin{equation}{section}
\newtheorem{theo}{Theorem}[section]
\newtheorem{coro}[theo]{Corollary}
\newtheorem{Lemma}[theo]{Lemma}
\newtheorem{propo}[theo]{Proposition}
 { \theoremstyle{definition}
\newtheorem{Remark}[theo]{Remark} }
\newcommand{\RR}{\mathbb{R}}
\newcommand{\ZZ}{\mathbb{Z}}
\newcommand{\simbolovettore}[1]{{\boldsymbol{#1}}}
\newcommand{\vn}{\simbolovettore{n}}
\newcommand{\vq}{\simbolovettore{q}}
\newcommand{\vu}{\simbolovettore{u}}
\newcommand{\vv}{\simbolovettore{v}}
\newcommand{\vw}{\simbolovettore{w}}
\newcommand{\vx}{\simbolovettore{x}}
\newcommand{\vz}{\simbolovettore{z}}
\newcommand{\vQ}{\simbolovettore{Q}}
\newcommand{\vN}{\simbolovettore{N}}
\newcommand{\zero}{\boldsymbol{0}}
\newcommand{\im}{\operatorname{Im}}
\DeclareMathOperator{\Ima}{Im}
\newcommand{\mgrad}{\nabla_M}
\newcommand{\hatmgrad}{\hat\nabla_M}
\newcommand{\conf}[2]{\mathbb{F}_{#1}(#2)}
\newcommand{\eucnorm}[1]{\lvert{#1}\rvert}
\newcommand{\mnorm}[1]{{\lVert{#1}\rVert}_M}
\newcommand{\mscalar}[2]{{\langle{#1},{#2}\rangle}_{M}}
\begin{document}

\allowdisplaybreaks

\newcommand{\arXivNumber}{1608.00480}

\renewcommand{\PaperNumber}{021}

\FirstPageHeading

\ShortArticleName{Central Conf\/igurations and Mutual Dif\/ferences}

\ArticleName{Central Conf\/igurations and Mutual Dif\/ferences}

\Author{D.L.~FERRARIO}

\AuthorNameForHeading{D.L.~Ferrario}

\Address{Department of Mathematics and Applications, University of Milano-Bicocca,\\ Via R.~Cozzi, 55 20125 Milano, Italy}
\Email{\href{mailto:davide.ferrario@unimib.it}{davide.ferrario@unimib.it}}
\URLaddress{\url{http://www.matapp.unimib.it/~ferrario/}}

\ArticleDates{Received December 06, 2016, in f\/inal form March 27, 2017; Published online March 31, 2017}

\Abstract{Central conf\/igurations are solutions of the equations $\lambda m_j\vq_j = \frac{\partial U}{\partial \vq_j}$, where~$U$ denotes the potential function and each $\vq_j$ is a point in the $d$-dimensional Euclidean space $E\cong \RR^d$, for $j=1,\ldots, n$. We show that the vector of the mutual dif\/ferences \smash{$\vq_{ij} = \vq_i - \vq_j$} satisf\/ies the equation $-\frac{\lambda}{\alpha} \vq = P_m(\Psi(\vq))$, where~$P_m$ is the orthogonal projection over the spaces of $1$-cocycles and $\Psi(\vq) = \frac{\vq}{\eucnorm{\vq}^{\alpha+2}}$. It is shown that dif\/ferences~$\vq_{ij}$ of central conf\/igurations are critical points of an analogue of~$U$, def\/ined on the space of $1$-cochains in the Euclidean space~$E$, and restricted to the subspace of $1$-cocycles. Some generalizations of well known facts follow almost immediately from this approach.}

\Keywords{central conf\/igurations; relative equilibria; $n$-body problem}

\Classification{37C25; 70F10}

\section{Introduction}

Central conf\/igurations play an important role in the (Newtonian) $n$-body problem: to name two, they arise as conf\/igurations yielding homographic solutions, and as rest points in the f\/low on the McGehee collision manifold. Following the spirit of Albouy and Chenciner~\cite{MR1489897}, in this article we study the problem of central conf\/igurations from the point of view of mutual distances; but instead of lengths we consider the space of \emph{differences} of positions, which turns out to be a~suitable group of cochains~$C^1$ of degree $1$ with coef\/f\/icients in the Euclidean space~$E$. Hence, we show that central conf\/igurations are critical points of a function def\/ined on $C^1$ and restricted to the subspace of $1$-cocycles, and show some consequences. The technique of embedding the central conf\/igurations problem into a suitable space of cocycles was actually already used by Moeckel in \cite{MR805839}, in an implicit way, and again by Moeckel and Montgomery in~\cite{MR3069058}. In this article we study this approach introducing cocycles and cohomology, and show that many calculations can be signif\/icantly simplif\/ied in this way. For further details and recent remarkable advances we refer to~\cite{MR2925390, MR2207019}.

More precisely, assume $n\geq 2$, $d\geq1$. Let $E=\RR^d$ denote the $d$-dimensional Euclidean space. An element of $E^n$ will be denoted by $\vq=(\vq_1,\vq_2,\ldots, \vq_n)$ where $\forall\, j$, $\vq_j \in E$. Let $\conf{n}{E}$ denote as in~\cite{FH} the conf\/iguration space of~$n$ particles in~$E$:
\begin{gather*}
\conf{n}{E} = \big\{\vq \in E^n \colon \vq_i \neq \vq_j \big\}.
\end{gather*}
If $\Delta$ is the collision set
\begin{gather*}
\Delta = \bigcup_{i<j} \big\{\vq \in E^n \colon \vq_i = \vq_j \big\},
\end{gather*}
then $\conf{n}{E} = E^n \setminus \Delta$.

For $j=1,\ldots, n$, let $m_j >0$ be positive masses. Assume that the masses are normalized, i.e., that
\begin{gather}\label{eq:masses1}
\sum_{j=1}^n m_j = 1.
\end{gather}
Let $\mscalar{*}{*}$ denote the \emph{mass-metric} on (the tangent vectors of) $E^n$, def\/ined as
\begin{gather*}
\mscalar{\vv}{\vw} = \sum_{j=1}^n m_j \vv_j \cdot \vw_j,
\end{gather*}
where $\vv_j \cdot \vw_j$ is the Euclidean scalar product in (the tangent space of)~$E$. Let $\eucnorm{\vv_j}$ denote the Euclidean norm of a~vector $\vv_j$ in $E$. The norm corresponding to the mass-metric is $\mnorm{\vv} = \sqrt{\mscalar{\vv}{\vv}}$.

Let $\alpha>0$ be a f\/ixed homogeneity parameter, and $U \colon \conf{n}{E} \to \RR$ the potential function def\/ined as
\begin{gather*}
U(\vq) = \sum_{1\leq i<j \leq n} \frac{m_im_j}{\eucnorm{\vq_i - \vq_j}^\alpha}.
\end{gather*}

A \emph{central configuration} is a conf\/iguration $\vq\in \conf{n}{E}$ such that there exists $\lambda \in \RR$ such that ($\forall\, j=1,\ldots, n$)
\begin{gather}
\label{eq:cc1}
\lambda m_j \vq_j = \frac{\partial U}{\partial \vq_j} =
- \alpha \sum_{k\neq j} m_jm_k\frac{\vq_j - \vq_k}{\eucnorm{\vq_j-\vq_k}^{\alpha+2}}.
\end{gather}
If $\vq$ is a central conf\/iguration, then
\begin{gather*}
\begin{aligned}
\lambda \sum_{j} m_j \eucnorm{\vq_j}^2 &
= -\alpha \sum_{j=1}^n \sum_{k\neq j,k=1}^{n} m_j m_k \frac{(\vq_j - \vq_k)\cdot \vq_j}{|\vq_j - \vq_k|^{\alpha+2}}\\
& = - \alpha\sum_{j<k} m_jm_k \frac{ (\vq_j - \vq_k)\cdot \vq_j + (\vq_k - \vq_j)\cdot \vq_k} {\eucnorm{\vq_j-\vq_k}^{\alpha+2}} =
 - \alpha
\sum_{j<k} \frac{m_jm_k} {\eucnorm{\vq_j-\vq_k}^{\alpha}} \\
\implies \lambda \mnorm{\vq}^2 & = -\alpha U(\vq),
\end{aligned}
\end{gather*}
and hence $\lambda = -\alpha \frac{ U(\vq)}{\mnorm{\vq}^2} <0$. By summing equation~\eqref{eq:cc1} in $j$
\begin{gather*}
\lambda \sum_j m_j \vq_j = - \alpha\sum_{j<k} m_jm_k \frac{ (\vq_j - \vq_k) + (\vq_k - \vq_j)} {\eucnorm{\vq_j-\vq_k}^{\alpha+2}} = \zero
\end{gather*}
and hence
\begin{gather*}
\sum_j m_j \vq_j = \zero.
\end{gather*}
For an analysis of central conf\/igurations for general potential functions $U(\vq)$, see \cite{MR2372989,Fer2015}. Also, central conf\/igurations can be equivalently seen as:
\begin{compactenum}
\item[\textbf{(CC1)}]\label{item:cc1} Solutions of \eqref{eq:cc1} \cite{MR1082871}.
\item[\textbf{(CC2)}]\label{item:cc2} Critical points of the restriction of the potential function $U$ to the inertia ellipsoid $S = \{ \vq \in \conf{n}{E} \colon \mnorm{\vq}^2=1 \}$ \cite{MR3469182}.
\item[\textbf{(CC3)}]\label{item:cc3} Fixed points of the map $F\colon S \to S$ def\/ined as $F(\vq) = - \frac{ \nabla_M U(\vq)}{\mnorm{\nabla_M U(\vq)}}$,
where $\nabla_M$ denotes the gradient with respect to the mass-metric on ${\conf{n}{E}}$ \cite{MR2372989,Fer2015}.
\item[\textbf{(CC4)}]\label{item:cc4} Critical points on $\conf{n}{E}$ of the map $\mnorm{\vq}^2+ U(\vq)$~\cite{ItuMad2015}.
\item[\textbf{(CC5)}]\label{item:cc5} Critical points on $\conf{n}{E}$ of the map $\mnorm{\vq}^{2\alpha} U(\vq)^2$ (or $\mnorm{\vq}^\alpha U(\vq)$)~\cite{MR2052612}.
\end{compactenum}

In all these formulations, central conf\/igurations appear as $O(d)$-orbits in $\conf{n}{E}$, where the action of the orthogonal group $O(d)$ on~$\conf{n}{E}$ is diagonal $g\cdot \vq = (g\vq_1,\ldots, g\vq_n)$.

Def\/ine the space $X$ as
\begin{gather*}
X = \left\{ \vq \in \conf{n}{E} \colon \sum_{j=1}^n m_j \vq_j = \zero \right\}.
\end{gather*}

\section{Central conf\/igurations and mutual dif\/ferences}

Let $\vn$ be the set $\vn=\{1,2,\ldots, n\}$ and $C^0$ the vector space of all maps from $\vn$ to $E$: $C^0 = \{ \vq \colon \vn\to E \}$. Let $\conf{n}{E} \subset C^0$ denote the inclusion sending $\vq \in \conf{n}{E}$ to the map $\vq\colon \vn \to E$ def\/ined by $\vq(j) = \vq_j$ for each $j\in \vn$.

Now, let $\tilde\vn$ denote the set of all ${n \choose {2}}$ subsets in $\vn$ with two elements: $\tilde\vn=\{\{1,2\},\{1,3\},\ldots$, $\{n-1,n\}\}$. Let $C^1$ denote the vector space of all maps from $\tilde\vn$ to~$E$:
\begin{gather*}
C^1 = \{ \vq \colon \tilde\vn \to E \}.
\end{gather*}
It is isomorphic to $E^{\tilde n}$, where $\tilde n={n \choose {2}}$. Note that if $E^{\vn^2}$ denotes that vector space of all maps $\vq \colon \vn^2 \to E$, where $\vn^2 = \vn\times \vn$ (and hence if $\vq \in E^{\vn^2}$, we can denote $\vq_{ij} = \vq( (i,j) ) \in E$), there is an embedding $C^1 \subset E^{\vn^2}$, by sending an element $\vq\in C^1$ to the map $\vq'\colon \vn^2 \to E$ def\/ined by
\begin{gather*}
\vq'_{ij} =
\begin{cases}
\vq(\{i,j\}) & \text{if $i<j$}, \\
-\vq(\{i,j\}) & \text{if $i>j$}, \\
0 & \text{if $i=j$.}
\end{cases}
\end{gather*}
In fact, we are identifying elements in $C^1$ with the skew-symmetric elements in $E^{\vn^2}$ (that is, maps $\vq_{ij} + \vq_{ji} = \zero \in E$). Given $\vq \in C^1$ with an abuse of notation we will write $\vq_{ij}$ instead of~$\vq'_{ij}$, and~$ij$ instead $(i,j)$.

If $K$ is an abstract simplicial complex, recall that the simplicial chain complex of~$K$ with real coef\/f\/icients, denoted by $C_*(K;\RR)$, is def\/ined as follows: for each $k\in \ZZ$, the chain group $C_k(K;\RR)$ is the vector space of all the $\RR$-linear combinations of $k$-dimensional simplexes of~$K$; the boundary homomorphism $\partial_k \colon C_k(K;\RR) \to C_{k-1}(K;\RR)$ is def\/ined as $\partial_k(\sigma ) = \sum\limits_{j=0}^k (-1)^j \sigma d_j $ for each $k$-simplex $\sigma$ of~$K$ and $0$ otherwise, where $d_j$ is the $j$-th face map. More precisely, all simplices in~$K$ can be ordered, and elements in $C_k(K;\RR)$ will be linear combinations of ordered $k$-simplices in~$K$. An ordered $k$-simplex with vertices $x_0,\ldots, x_{k}$ will be denoted either as $[x_0,\ldots, x_k]$ or simply as $x_0\ldots x_k$. With this notation, the $j$-th face map sends $\sigma = [x_0,\ldots,x_j,\ldots, x_k]$ to $\sigma d_j = [ x_0,\ldots, \widehat x_j, \ldots, x_k]$, where $\widehat x_j$ means that the $j$-th element is canceled.

By taking homomorphisms valued in an $\RR$-vector space $E$, the chain complex $C_k(K;\RR)$ yields the simplicial cochain complex with coef\/f\/icients in $E$: the cochain groups are def\/ined as all the linear homomorsphisms $C^k(K;E) = \hom_\RR(C_k(K;\RR), E)$, and the coboundary homomorphisms $\delta^k \colon C^k(K;E) \to C^{k+1}(K;E)$ are def\/ined for each $k$ by
\begin{gather*}
\delta^k( \eta ) = \eta \partial_{k+1} \colon \ C_{k+1}(K;\RR) \to E
\end{gather*}
for each cochain $\eta \colon C_{k}(K;\RR) \to E$. The kernel of $\delta^k$ is the group of cocycles, and it is denoted as $Z^k(K;E) = \ker \delta^k \subset C^k(K;E)$.

Now, let $\Delta^{n-1}$ denote the standard (abstract) simplex with~$n$ vertices $\{1,2,\ldots, n\}$. Then the vector spaces $C^k$ def\/ined above for $k=0,1$ are exactly the groups of $k$-dimensional simplicial cochains (with coef\/f\/icients in the vector space~$E$) of the simplicial complex $\Delta^{n-1}$: $C^0 = C^0(\Delta^{n-1};E)$ and $C^1 = C^1(\Delta^{n-1};E)$. A $0$-simplex of $\Delta^{n-1}$ is simply an element $j \in \vn $, and hence a~$0$-dimensional cochain is an $n$-tuple $\vq_j$, i.e., a~map $\vq \colon \vn \to E$. Furthermore, a~$1$-di\-men\-sional cochain is a~map $\vq$ def\/ined with values in $E$ and as domain the set of $1$-dimensional simplices of $\Delta^{n-1}$, i.e., pairs $ij$ with $1\leq i<j\leq n$.

In simpler terms, for each $i,j$ such that $1\leq i < j \leq n$, let $\vq_{ij} \in E$ denote the $ij$-the component of a vector in $E^{\tilde n}$, and for $i> j$, the variable $\vq_{ij}$ is def\/ined by the property that $\forall\, i,j$, $\vq_{ij} + \vq_{ji} = \zero$.

The coboundary operator $\delta^0 \colon C^0 \to C^1$ is the map def\/ined by $\delta^0 \vq = \vq \partial_1$ for each $\vq \in C^0$, and hence
\begin{gather*}
\delta^0(\vq) ({i,j}) = \vq_j - \vq_i \in E
\end{gather*}
for all $i$, $j$. In fact, for each $\vq \colon \vn \to E$, $\delta^0(\vq) (ij) = \vq \partial_1 [i,j] = \vq( j-i) = \vq_j - \vq_i$. For $k=1$, the coboundary operator is def\/ined as $\delta^1\colon C^1 \to C^2$ as
\begin{gather*}
\delta^1(\vq)(ijk) = \vq \partial_2(ijk) = \vq( jk - ik + ij) = \vq_{ij} + \vq_{jk} + \vq_{ki}.
\end{gather*}

Moreover, since the simplex $\Delta^{n-1}$ is contractible, its cohomology groups are trivial except for $k-0$, and therefore for each $k \geq 0$
\begin{gather*}
Z^{k+1}\big(\Delta^{n-1};E\big) = \ker \delta^{k+1} = \im \delta^{k}.
\end{gather*}
With an abuse of notation, when not necessary the subscript of $\partial_k$ and the supscript in $\delta^k$ will be omitted.

For each $\vq \in C^1$, let $\vQ$ be def\/ined as $\vQ_{jk} = \frac{\vq_{jk}}{\eucnorm{\vq_{jk}}^{\alpha+2}}$. Note that $\vQ_{jk} = \Psi_\gamma (\vq_{jk})$
with $\gamma=\alpha+2$ where $\Psi_\gamma(\vx) = \frac{\vx}{\eucnorm{\vx}^\gamma}$ for each $\vx\in E$. It turns out that the map $\Psi_\gamma\colon E\setminus\{\zero\} \to E \setminus \{\zero\}$ is a~dif\/feomorphism with inverse $\Psi_{\hat\gamma}$ where $\hat\gamma = \frac{\gamma}{\gamma-1}$.

Consider now that if one def\/ines $\vq_{ij}=\vq_i - \vq_j$, one can read equation \eqref{eq:cc1} as (as a consequence of equation~\eqref{eq:masses1})
\begin{gather*}
\lambda \vq_j=-\alpha \sum_{k\neq j} m_k\frac{\vq_{jk}}{\eucnorm{\vq_{jk}}^{\alpha+2}}=-\alpha\sum_{k\neq j} m_k \vQ_{jk},
\end{gather*}
and hence
\begin{align}
-\frac{\lambda}{\alpha} \vq_{ij} & =\sum_{k\neq i} m_k \vQ_{ik}-\sum_{k\neq j} m_k \vQ_{jk} =
\sum_{k\not\in\{i,j\}} m_k (\vQ_{ik} + \vQ_{kj})+ m_j \vQ_{ij} + m_i \vQ_{ij} \nonumber\\
& =\sum_{k\not\in\{i,j\}} m_k (\vQ_{ik} + \vQ_{kj} + \vQ_{ji})-\sum_{k\not\in\{i,j\}} m_k \vQ_{ji}+ m_j \vQ_{ij} + m_i \vQ_{ij} \nonumber\\
&=\sum_{k\not\in\{i,j\}} m_k (\vQ_{ik} + \vQ_{kj} + \vQ_{ji})+\left(\sum_k m_k\right) \vQ_{ij} \nonumber\\
\label{eq:qij}
\iff -\frac{\lambda}{\alpha} \vq_{ij}& = \sum_{k\not\in\{i,j\}} m_k (\vQ_{ik} + \vQ_{kj} + \vQ_{ji})+\vQ_{ij}.
\end{align}

\begin{propo}\label{propo:projection}
The $($linear$)$ map $P_m \colon C^1 \to C^1$ defined by
\begin{gather*}
\left( P_m(\vQ) \right)_{ij} = \sum_{k\not\in\{i,j\}} m_k (\vQ_{ik} + \vQ_{kj} + \vQ_{ji})+\vQ_{ij}
\end{gather*}
is a projection from $C^1$ onto $Z^1\subset C^1$, where $Z^1=\ker \delta^1 \colon C^1 \to C^2$ is the subspace of $1$-cocycles.
\end{propo}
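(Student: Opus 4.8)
The plan is to verify the two properties that characterise a projection onto $Z^1$: first that $\im P_m \subseteq Z^1$, i.e.\ $\delta^1\big(P_m(\vQ)\big) = \zero$ for every $\vQ$, and second that $P_m$ acts as the identity on $Z^1$. Together these give $P_m^2 = P_m$ (since $P_m(\vQ)$ lands in $Z^1$, where $P_m$ fixes it) and $\im P_m = Z^1$ (the image is contained in $Z^1$ by the first property and contains it by the second). Linearity is immediate from the defining formula, and the skew-symmetry $\big(P_m(\vQ)\big)_{ij} = -\big(P_m(\vQ)\big)_{ji}$ needed for $P_m(\vQ)$ to lie in $C^1$ follows at once from the skew-symmetry of $\vQ$ and of the bracketed term, so I regard well-definedness as routine.

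The computational backbone is the observation that the summand appearing in the definition is, up to sign, a value of $\delta^1\vQ$. Writing $\delta\vQ(ijk) = \vQ_{ij} + \vQ_{jk} + \vQ_{ki}$ for the image under $\delta^1\colon C^1 \to C^2$, a direct use of skew-symmetry gives $\vQ_{ik} + \vQ_{kj} + \vQ_{ji} = -\,\delta\vQ(ijk)$, so that
\[
\big(P_m(\vQ)\big)_{ij} = \vQ_{ij} - \sum_{k\not\in\{i,j\}} m_k\, \delta\vQ(ijk).
\]
The identity-on-$Z^1$ property is then immediate: if $\vQ \in Z^1 = \ker\delta^1$, every term $\delta\vQ(ijk)$ vanishes and $\big(P_m(\vQ)\big)_{ij} = \vQ_{ij}$.

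The substance of the proof is the first property. I would compute $\delta^1\big(P_m(\vQ)\big)(abc) = \big(P_m(\vQ)\big)_{ab} + \big(P_m(\vQ)\big)_{bc} + \big(P_m(\vQ)\big)_{ca}$ directly from the displayed formula. The leading terms reassemble into $\delta\vQ(abc)$. In the remaining triple sum I would separate the indices $k \in \{a,b,c\}$ from $k \not\in\{a,b,c\}$. For the former, the cyclic invariance $\delta\vQ(abc) = \delta\vQ(bca) = \delta\vQ(cab)$, visible directly from $\delta\vQ(abc)=\vQ_{ab}+\vQ_{bc}+\vQ_{ca}$, contributes $(m_a+m_b+m_c)\,\delta\vQ(abc)$. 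For the latter, the key input is the relation $\delta^2\delta^1 = \zero$: expanding $\delta^2(\delta\vQ)$ on the $3$-simplex $[a,b,c,k]$ gives $\delta\vQ(abk) + \delta\vQ(bck) - \delta\vQ(ack) = \delta\vQ(abc)$, and since $\delta\vQ(cak) = -\delta\vQ(ack)$ by skew-symmetry, this reads $\delta\vQ(abk) + \delta\vQ(bck) + \delta\vQ(cak) = \delta\vQ(abc)$ for each $k\not\in\{a,b,c\}$; hence this part contributes $\big(\sum_{k\not\in\{a,b,c\}} m_k\big)\,\delta\vQ(abc)$. Adding the two pieces and invoking the mass normalisation $\sum_k m_k = 1$ shows the triple sum equals exactly $\delta\vQ(abc)$, which cancels the leading term, giving $\delta^1\big(P_m(\vQ)\big)(abc) = \zero$.

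I expect the one delicate point to be the bookkeeping in this last computation: correctly isolating the diagonal contributions $k\in\{a,b,c\}$, tracking the antisymmetry signs so that $\delta\vQ(cak) = -\delta\vQ(ack)$ lines up with the cochain identity coming from $\delta^2\delta^1 = \zero$, and confirming that the diagonal and off-diagonal mass sums recombine into the full sum $\sum_k m_k = 1$. Everything else is a routine consequence of the cochain formalism already set up above.
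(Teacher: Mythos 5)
Your proof is correct, and it is essentially the cochain-level mirror of the paper's argument, with one genuine structural difference worth noting. The paper works on chains: it defines $\pi_m[i,j] = [i,j] - \sum_{k\not\in\{i,j\}} m_k \,\partial_2[i,j,k]$, so that $P_m(\vQ) = \vQ \circ \pi_m$, and proves $\pi_m\partial_2 = 0$ — the exact dual of your computation $\delta^1 \circ P_m = 0$, resting on the same two ingredients: the identity $\partial_2([a,b,k]+[b,c,k]+[c,a,k]) = \partial_2[a,b,c]$ (dual to your $\delta\vQ(abk)+\delta\vQ(bck)+\delta\vQ(cak) = \delta\vQ(abc)$) and the normalization $\sum_k m_k = 1$. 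From this the paper gets idempotence and $\Ima(P_m)\subset Z^1$, just as you do. The difference is in the reverse inclusion: to show $P_m$ fixes $Z^1$, the paper writes each cocycle as a coboundary $\vz = \delta^0\vx$ — invoking contractibility of $\Delta^{n-1}$, i.e., $Z^1 = \im\delta^0$ — and then uses the auxiliary identity $\partial_1\pi_m = \partial_1$. Your rewriting $\left(P_m(\vQ)\right)_{ij} = \vQ_{ij} - \sum_{k\not\in\{i,j\}} m_k\,(\delta^1\vQ)(ijk)$ makes "$P_m$ is the identity on $Z^1=\ker\delta^1$" a tautology, so you need neither the second identity nor the vanishing of the first cohomology of the simplex. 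What the paper's formulation buys is the explicit chain-level operator $\pi_m$, which exhibits $P_m$ as the adjoint of a mass-weighted averaging of boundaries; what yours buys is economy — a single computation plus a trivial observation, with strictly weaker inputs.
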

\begin{proof}
Consider the homomorphism $\pi_m$ def\/ined on the vector space of simplicial $1$-chains $C_1(\Delta^{n-1},\RR)$ with real coef\/f\/icient, as
\begin{gather*}
\pi_m([i,j]) = [i,j] - \sum_{k\neq \{i,j\}} m_k \partial_2([i,j,k]),
\end{gather*}
where $\partial_2 \colon C_2 \to C_1$ is the boundary homomorphism in dimension~$2$. Then for any $\vQ \in C^1$ and any $i$, $j$ with $i\neq j$
\begin{gather*}
P_m(\vQ)[i,j] = \vQ ( \pi_m [i,j]) .
\end{gather*}
For each $2$-simplex $[a,b,c]$ of $\Delta^{n-1}$ one has $\partial_2 ([a,b,k] + [b,c,k] + [c,a,k]) = \partial_2 [a,b,c]$ for each $k\neq \{a,b,c\}$, and hence
\begin{gather*}
\pi_m \partial_2 [a,b,c] = \pi_m([a,b] + [b,c] + [c,a]) =
[a,b] - \sum_{k\neq \{a,b\}} m_k \partial_2 [a,b,k]\\
\hphantom{\pi_m \partial_2 [a,b,c] =}{} +
[b,c] - \sum_{k\neq \{b,c\}} m_k \partial_2 [b,c,k] + [c,a] - \sum_{k\neq \{c,a\}} m_k \partial_2 [c,a,k]\\
\hphantom{\pi_m \partial_2 [a,b,c]}{} =
\partial_2 [a,b,c]- \sum_{k\not\in\{a,b,c\}} m_k \partial_2 [a,b,c]-
m_c \partial_2 [a,b,c] -m_a \partial_2 [b,c,a] -m_b \partial_2 [c,a,b]\\
\hphantom{\pi_m \partial_2 [a,b,c]}{} = \partial_2 [a,b,c] - \left( \sum_k m_k \right) \partial_2 [a,b,c] = 0\\
\implies \pi_m \partial_2 = 0.
\end{gather*}

As a consequence, $\pi_m$ is a projection, since for each $i$, $j$, $i\neq j$,
\begin{gather*}
\pi_m^2[i,j] =\pi_m\left([i,j] - \sum_{k\not\in \{i,j\}} m_k \partial_2([i,j,k])\right) = \pi_m[i,j] - \sum_{k\not\in\{i,j\}} m_k \pi_m\partial_2[i,j,k] =
\pi_m[i,j].
\end{gather*}
Therefore, also $P_m\colon C^1 \to C^1$ is a projection
\begin{gather*}
P^2_m ( \vQ ) [i,j] = P_m ( \vQ) (\pi_m [i,j] ) = \vQ ( \pi_m^2 [i,j] ) = \vQ ( \pi_m [i,j] ) = P_m(\vQ)[i,j].
\end{gather*}

Moreover, since
\begin{gather}\label{eq:partialpim}
\partial_1 \pi_m = \partial_1
\end{gather}
it follows that the projection $P_m$ is onto the subspace of all $1$-cocycles in $C^1$, denoted in short by~$Z^1$. In fact, since $\pi_m \partial_2 =0$,
\begin{gather*}
\delta^1 P_m \vQ = P_m \vQ \partial_2 = \vQ \pi_m \partial_2 = 0 \implies \Ima(P_m) \subset Z^1
\end{gather*}
and, by \eqref{eq:partialpim}, for each cocycle $\vz \in Z^1 \iff \vz = \delta^0 \vx $ one has
\begin{gather*}
P_m \vz = P_m \delta^0 \vx = P_m \vx \partial_1 = \vx \partial_1 \pi_m = \vx \partial_1 = \delta^0 \vx = \vz
\end{gather*}
and hence $\Ima(P_m) \supset Z^1$. We can conclude, as claimed, that $\Ima(P_m) = Z^1$.
\end{proof}

As examples, for $d=1$ and $n=3,4$ the matrices of the projection $P_m$ are
\begin{gather*}
\begin{bmatrix}
m_{1}+m_{2}& m_{3} & -m_{3} \\
 m_{2} &m_{1}+m_{3}& m_{2} \\
 -m_{1} & m_{1} &m_{2}+m_{3}
\end{bmatrix},
\\
\begin{bmatrix}
m_{1}+m_{2}& m_{3} & m_{4} & -m_{3} & -m_{4} &0 \\
 m_{2} &m_{1}+m_{3}& m_{4} & m_{2} &0& -m_{4} \\
 m_{2} & m_{3} &m_{1}+m_{4}&0& m_{2} & m_{3} \\
 -m_{1} & m_{1} &0&m_{2}+m_{3}& m_{4} & -m_{4} \\
 -m_{1} &0& m_{1} & m_{3} &m_{2}+m_{4}& m_{3} \\
0& -m_{1} & m_{1} & -m_{2} & m_{2} &m_{3}+m_{4}
\end{bmatrix}.
\end{gather*}
In fact, for $n=3$ the space of cochains $C^1$ has standard coordinates $\vQ_{ij}$ for $ij\in \{12,13,23\}$, and by Proposition~\ref{propo:projection} the projection $P_m$ in these coordinates is def\/ined by
\begin{gather*}
\left( P_m(\vQ) \right)_{12} =\sum_{k\not\in\{1,2\}} m_k (\vQ_{1k} + \vQ_{k2} + \vQ_{21})+\vQ_{12} =m_3 ( \vQ_{13} + \vQ_{32} + \vQ_{21}) +\vQ_{12},\\
\left( P_m(\vQ) \right)_{13} =\sum_{k\not\in\{1,3\}} m_k (\vQ_{1k} + \vQ_{k3} + \vQ_{31})+\vQ_{13} =m_2(\vQ_{12} + \vQ_{23} + \vQ_{31}) + \vQ_{13},\\
\left( P_m(\vQ) \right)_{23} =\sum_{k\not\in\{2,3\}} m_k (\vQ_{2k} + \vQ_{k3} + \vQ_{32})+\vQ_{23} =m_1( \vQ_{21}+\vQ_{13} + \vQ_{32} ) + \vQ_{23},
\end{gather*}
from which it follows that
\begin{gather*}
\left( P_m(\vQ) \right)_{12} =(1-m_3) \vQ_{12}+ m_3 \vQ_{13}- m_3 \vQ_{23},\\
\left( P_m(\vQ) \right)_{13} =m_2\vQ_{12}+ (1-m_2)\vQ_{13}+ m_2 \vQ_{23},\\
\left( P_m(\vQ) \right)_{23} =- m_1\vQ_{12}+ m_1 \vQ_{13}+ (1-m_1)\vQ_{23}.
\end{gather*}
The same argument yields the matrix for $n=4$, in coordinates $\vQ_{ij}$ for $ij$ in the order $12$, $13$, $14$, $23$, $24$, $34$.

Consider the following scalar product on $C^1$, similar to the mass-metric on the conf\/iguration space: for $\vv,\vw \in C^1$,
\begin{gather}\label{mmscalar}
\mscalar{\vv}{\vw} =\sum_{i<j} m_i m_j \left(\vv_{ij} \cdot \vw_{ij}\right),
\end{gather}
where as above the dot denotes the standard $d$-dimensional scalar product in $E$. It is the mass-metric on $C^1$, and as above $\mnorm{\vv}^2 = \mscalar{\vv}{\vv}$. It follows that
\begin{gather*}
\mscalar{ \vv}{ P_m (\vw) } =\sum_{i<j}m_i m_j \left( \vv_{ij} \cdot \left( P_m(\vw) \right)_{ij} \right)\\
\hphantom{\mscalar{ \vv}{ P_m (\vw) }}{} =\sum_{i<j} m_i m_j \left( \vv_{ij} \cdot \left(\sum_{k\not\in\{i,j\}} m_k (\vw_{ik} + \vw_{kj} + \vw_{ji})+\vw_{ij} \right) \right) \\
\hphantom{\mscalar{ \vv}{ P_m (\vw) }}{} = \sum_{i<k} \sum_{k\not\in\{i,j\}}m_i m_j m_k ( \vv_{ij} \cdot (\vw_{ik}+\vw_{kj} + \vw_{ji}) )+
\sum_{i<j} m_i m_j \vv_{ij} \cdot \vw_{ij} \\
\hphantom{\mscalar{ \vv}{ P_m (\vw) }}{} = \sum_{a<b<c} m_a m_b m_c\big(\vv_{ab}\cdot ( \vw_{ac} + \vw_{cb} + \vw_{ba}) \\
\hphantom{\mscalar{ \vv}{ P_m (\vw) }=}{}+ \vv_{ac} \cdot ( \vw_{ab} + \vw_{bc} + \vw_{ca})+\vv_{bc} \cdot ( \vw_{ba} + \vw_{ac} + \vw_{cb})\big)+\sum_{i<j} m_i m_j \vv_{ij} \cdot \vw_{ij} \\
\hphantom{\mscalar{ \vv}{ P_m (\vw) }}{}= - \sum_{a<b<c} m_a m_b m_c\left((\vv_{ab}+\vv_{bc}+\vv_{ca})\cdot ( \vw_{ab} + \vw_{bc} + \vw_{ca})
\right)\\
\hphantom{\mscalar{ \vv}{ P_m (\vw) }=}{}+ \sum_{i<j} m_i m_j \vv_{ij} \cdot \vw_{ij} \\
\hphantom{\mscalar{ \vv}{ P_m (\vw) }}{}=\sum_{i<k} \sum_{k\not\in\{i,j\}}m_i m_j m_k ( (\vv_{ik}+\vv_{kj} + \vv_{ji}) \cdot \vw_{ij} )+
\sum_{i<j} m_i m_j \vv_{ij} \cdot \vw_{ij} \\
\hphantom{\mscalar{ \vv}{ P_m (\vw) }}{} =\sum_{i<j} m_i m_j \left( \left(\sum_{k\not\in\{i,j\}} m_k (\vv_{ik} + \vv_{kj} + \vv_{ji})+
\vv_{ij} \right) \cdot \vw_{ij} \right)\\
\hphantom{\mscalar{ \vv}{ P_m (\vw) }}{}= \mscalar{ P_m (\vv)}{ \vw },
\end{gather*}
hence the following proposition holds.

\begin{propo}
The projection $P_m\colon C^1 \to Z^1\subset C^1$ is orthogonal $($self-adjoint$)$ with respect to the scalar product $\mscalar{-}{-}$ in~\eqref{mmscalar} defined on~$C^1$.
\end{propo}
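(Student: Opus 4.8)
The plan is to reduce the statement to the single algebraic fact that the bilinear form $\mscalar{\vv}{P_m(\vw)}$ is symmetric in $\vv$ and $\vw$. Indeed, Proposition~\ref{propo:projection} already gives $P_m^2=P_m$ with $\Ima(P_m)=Z^1$, and a projection that is self-adjoint for a nondegenerate inner product is exactly the orthogonal projection onto its image; so once I show $\mscalar{\vv}{P_m(\vw)}=\mscalar{P_m(\vv)}{\vw}$ for all $\vv,\vw\in C^1$, the proposition follows. All the work therefore sits in establishing this symmetry.

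To see it, I would substitute the formula for $P_m$ from Proposition~\ref{propo:projection} into the scalar product \eqref{mmscalar}. The diagonal piece $\sum_{i<j}m_im_j\,\vv_{ij}\cdot\vw_{ij}=\mscalar{\vv}{\vw}$ is already symmetric, so only the correction
\[
\sum_{i<j}\ \sum_{k\not\in\{i,j\}} m_im_jm_k\,\vv_{ij}\cdot(\vw_{ik}+\vw_{kj}+\vw_{ji})
\]
requires work. Each summand is indexed by a pair $\{i,j\}$ with $i<j$ together with a third vertex $k\not\in\{i,j\}$, i.e.\ by one pair inside a triple $\{a,b,c\}$; the key manipulation is to fix an unordered triple $a<b<c$ and collect the three contributions obtained by letting $\{i,j\}$ run over its three pairs. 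Using $\vw_{ji}=-\vw_{ij}$ and the identity $\vv_{ab}+\vv_{bc}+\vv_{ca}=\delta^1(\vv)(abc)$, these three terms assemble into $-m_am_bm_c\,\delta^1(\vv)(abc)\cdot\delta^1(\vw)(abc)$, so that
\[
\mscalar{\vv}{P_m(\vw)}=\mscalar{\vv}{\vw}-\sum_{a<b<c}m_am_bm_c\,\delta^1(\vv)(abc)\cdot\delta^1(\vw)(abc).
\]
The right-hand side is visibly invariant under $\vv\leftrightarrow\vw$, which is precisely the desired self-adjointness.

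It is worth recording the conceptual content of this identity: if one equips $C^2$ with the analogous mass metric $\langle\eta,\zeta\rangle=\sum_{a<b<c}m_am_bm_c\,\eta(abc)\cdot\zeta(abc)$ and lets $(\delta^1)^\ast\colon C^2\to C^1$ be the adjoint of the coboundary $\delta^1$ with respect to this metric and \eqref{mmscalar}, then the displayed formula reads $P_m=\mathrm{id}-(\delta^1)^\ast\delta^1$. In this form self-adjointness is automatic, since any operator $\mathrm{id}-B^\ast B$ equals its own adjoint; and it meshes nicely with the cocycle description, as $P_m$ acts as the identity exactly on the kernel of $\delta^1$, i.e.\ on $Z^1$.

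I expect the only genuine difficulty to be bookkeeping in the middle paragraph: tracking the signs when expanding the skew-symmetric entries $\vw_{ji}=-\vw_{ij}$, and checking that the three pair-contributions of each fixed triple $\{a,b,c\}$ combine into the single symmetric term with no stray diagonal remainder. This is a finite, purely combinatorial verification on one triple at a time, so once it is carried out the general case follows by summation and the proof is complete.
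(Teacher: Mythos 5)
Your proof is correct and takes essentially the same route as the paper: the paper's own argument (the displayed computation preceding the proposition) likewise expands $\mscalar{\vv}{P_m(\vw)}$, regroups the cross terms over unordered triples $a<b<c$, and recognizes each triple's contribution as the manifestly symmetric quantity $-m_am_bm_c\,(\vv_{ab}+\vv_{bc}+\vv_{ca})\cdot(\vw_{ab}+\vw_{bc}+\vw_{ca})$, which is exactly your $-m_am_bm_c\,\delta^1(\vv)(abc)\cdot\delta^1(\vw)(abc)$. Your closing observation that this packages the projection as $P_m=\mathrm{id}-(\delta^1)^\ast\delta^1$ for the mass metric on $C^2$ is a clean conceptual addition the paper does not state explicitly, but the underlying verification is the same.
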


Now, consider the subspace $X\subset \conf{n}{E}$ of all conf\/igurations with center of mass in $\zero$: $X=\{ \vq \in \conf{n}{E} \colon \sum_j m_j \vq_j = \zero \}$, i.e., of all $\vq \in C^0$ such that $\vq \sum_j m_j [j] = \zero$. The coboundary morphism $\delta^0_{|X}\colon X \subset C^0 \to C^1$ induces an isomorphism $\delta^0_{|X} \colon X \to Z^1$. Moreover, since if $\vq \in X$ then
\begin{gather*}
2 \sum_{i<j} m_im_j \eucnorm{\vq_i -\vq_j}^2 =\sum_{i,j} m_i m_j \eucnorm{\vq_i -\vq_j}^2 = \sum_{i,j} m_i m_j \left(
\eucnorm{\vq_i}^2 - 2\vq_i \cdot \vq_j + \eucnorm{\vq_j}^2\right) \\
\hphantom{2 \sum_{i<j} m_im_j \eucnorm{\vq_i -\vq_j}^2}{} =
2 \sum_{i} m_i \eucnorm{\vq_i}^2 - 2 \left( \sum_{j} m_j \vq_j \right)^2 =2 \sum_{i} m_i \eucnorm{\vq_i}^2
\end{gather*}
the isomorphism $\delta^0_{|X}\colon X \to Z^1$ is an isometry, where $X$ and $Z^1$ have the mass-metrics. Explicitly, for each $\vq \in X$,
\begin{gather*}
\mnorm{\vq} = \mnorm{ \delta^0 (\vq)},
\end{gather*}
where the two norms with the same symbol, with an abuse of notation, are actually dif\/ferent norms in $C^0$ and $C^1$ respectively.

Furthermore, the potential $U$ is the composition of the restriction to $X$ of the coboundary map $\delta^0$ with the map $\tilde U\colon C^1 \to \RR$ (partially) def\/ined by
\begin{gather*}
\tilde U(\vq) =\sum_{i<j} m_im_j \eucnorm{\vq_{ij}}^{-\alpha},
\end{gather*}
as illustrated in the following diagram
\begin{gather*}
\xymatrix@C+24pt@R+24pt{C^0 \ar[r]^{\delta^0} & C^1 \ar[r]^{\tilde U} \ar@/_1em/[d]_{P_m} & \RR \\
X \ar@{ >->}[u] \ar[r]_{\delta^0_{|X} }^{\cong} & Z^1. \ar@{ >->}@/_1em/[u]
}
\end{gather*}

Now, recall that (condition \textbf{(CC4)}) a conf\/iguration $\vq\in \conf{n}{E} \subset C^0$ is a central conf\/iguration is and only if it is a~critical points of the map $\mnorm{\vq}^2+ U(\vq)$, def\/ined on $\conf{n}{E}$. It is easy to see that this is equivalent to say that $\vq$ is a critical point of the map $\mnorm{\vq}^2+ U(\vq)$ restricted to~$X$. But this means that $\delta^0_{|X}$ sends central conf\/igurations in~$X$ to all the critical points of the map $\mnorm{\tilde \vq}^2+ \tilde U(\tilde \vq)$ (def\/ined on~$C^1$) restricted to the space of $1$-cocycles $Z^1$.

Hence, the following theorem holds.
\begin{theo}\label{theo:Utilde} Central configurations are critical points of the function partially defined as $C^1 \to \RR$
\begin{gather*}
\vq \mapsto \sum_{i<j} m_im_j\left( \eucnorm{\vq_{ij}}^{-\alpha} + \eucnorm{\vq_{ij}}^2 \right)
\end{gather*}
restricted to the space of $1$-cocycles $Z^1\subset C^1$.

A co-chain $\vq \in C^1$ is a central configuration if and only if there exists $\lambda \in \RR$ such that $\lambda \vq = P_m( \Psi(\vQ))$, where $\vQ_{ij} = \frac{\vq_{ij}}{|\vq_{ij}|^{\alpha+2}}$ for each $i$, $j$ and $P_m \colon C^1 \to Z^1\subset C^1$ is the orthogonal projection defined in Proposition~{\rm \ref{propo:projection}}, which sends $C^1$ onto the space of $1$-cocycles.
\end{theo}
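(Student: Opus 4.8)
The plan is to treat the two assertions separately, building on condition \textbf{(CC4)} and on the commutative diagram already in place. For the first assertion I would set $G(\vq)=\mnorm{\vq}^2+U(\vq)$ on $\conf{n}{E}$ and begin by verifying the claim, made just before the statement, that the critical points of $G$ coincide with those of the restriction $G|_X$. This rests on the orthogonal splitting of the mass-metric: writing each configuration as $\vq_j=\vc+\vq_j^{0}$ with $\vc=\sum_j m_j\vq_j$ and $\sum_j m_j\vq_j^{0}=\zero$, the normalization \eqref{eq:masses1} gives $\mnorm{\vq}^2=\eucnorm{\vc}^2+\mnorm{\vq^{0}}^2$, while $U(\vq)=U(\vq^{0})$ because $U$ depends only on differences; hence $G(\vq)=\eucnorm{\vc}^2+G(\vq^{0})$ decouples, the $\vc$-factor is critical exactly at $\vc=\zero$, and so the critical points of $G$ are precisely those of $G|_X$. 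I would then transport along the linear isometry $\delta^0_{|X}\colon X\to Z^1$, under which $\mnorm{\vq}=\mnorm{\delta^0\vq}$ and $U=\tilde U\circ\delta^0_{|X}$: critical points of $G|_X$ correspond bijectively to critical points of $\mnorm{\cdot}^2+\tilde U$ restricted to $Z^1$, and writing this function out as $\sum_{i<j}m_im_j(\eucnorm{\vq_{ij}}^{-\alpha}+\eucnorm{\vq_{ij}}^2)$ is the first claim.

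For the equation in the second assertion I would pass to the projected gradient with respect to the mass-metric on $C^1$. Because $Z^1$ is a linear subspace (its tangent space is $Z^1$ at every point) and, by Proposition~\ref{propo:projection} together with its self-adjointness, $P_m$ is the \emph{orthogonal} projection onto $Z^1$, a cochain $\vq\in Z^1$ is critical for $\mnorm{\cdot}^2+\tilde U$ on $Z^1$ if and only if $P_m\bigl(\mgrad(\mnorm{\cdot}^2+\tilde U)(\vq)\bigr)=\zero$. A direct computation gives the componentwise gradient $2\vq_{ij}-\alpha\vQ_{ij}$; applying $P_m$, using $P_m\vq=\vq$ on $Z^1$ and linearity, the condition collapses to $\frac{2}{\alpha}\vq=P_m(\vQ)$, i.e.\ the asserted relation $\lambda\vq=P_m(\vQ)$ with the particular value $\lambda=2/\alpha$ forced by the scale-fixing in \textbf{(CC4)}.

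To obtain the full ``if and only if'' with an arbitrary multiplier, I would argue directly from the purely algebraic identity \eqref{eq:qij}, valid for every skew-symmetric $\vQ$, which I read as $P_m(\vQ)_{ij}=\vy_i-\vy_j$ with $\vy_i=\sum_{k\neq i}m_k\vQ_{ik}$. In the forward direction, a configuration $\vx\in X$ satisfying \eqref{eq:cc1} has $\vq_{ij}=\vx_i-\vx_j$ obeying $-\frac{\lambda}{\alpha}\vq=P_m(\vQ)$ by \eqref{eq:qij}. For the converse, suppose $\lambda\vq=P_m(\vQ)$ with $\lambda\neq0$; since $\Ima(P_m)=Z^1$ this forces $\vq\in Z^1$, so $\vq=\delta^0\vx$ for a unique $\vx\in X$, with $\vq_{ij}=\vx_i-\vx_j$. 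The equation then becomes $\lambda\vx_i-\vy_i=\lambda\vx_j-\vy_j$ for all $i$, $j$, so $\vy_i=\lambda\vx_i-\vc$ for a single constant $\vc\in E$. The crux is to show $\vc=\zero$: taking the mass-weighted sum and using $\sum_i m_i\vx_i=\zero$ together with $\sum_i m_i\vy_i=\sum_{i\neq k}m_im_k\vQ_{ik}=\zero$ (skew-symmetry) yields $\vc=\zero$, whence $\lambda\vx_i=\sum_{k\neq i}m_k\vQ_{ik}$, which is exactly \eqref{eq:cc1} after dividing by $m_i$, with central-configuration multiplier $-\alpha\lambda$.

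The step I expect to be the main obstacle is the bookkeeping of the multiplier and of the center of mass, rather than any single computation. Condition \textbf{(CC4)} pins the scale and therefore the single value $\lambda=2/\alpha$ in the variational picture, whereas the relation $\lambda\vq=P_m(\vQ)$ is meant to hold along the whole one-parameter family of rescalings of a central configuration; keeping these consistent, and in particular the recovery of $\vc=\zero$ in the converse---which is precisely what promotes the componentwise difference equation back to the genuine per-particle relation \eqref{eq:cc1}---is the real content beyond the formal transport of critical points through the isometry $\delta^0_{|X}$.
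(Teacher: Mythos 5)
Your proposal is correct and follows essentially the same route as the paper: condition \textbf{(CC4)}, reduction of the critical-point problem from $\conf{n}{E}$ to $X$, transport through the isometry $\delta^0_{|X}\colon X \to Z^1$, and the algebraic identity \eqref{eq:qij} together with Proposition~\ref{propo:projection} and the self-adjointness of $P_m$. What you add — the explicit decoupling $G(\vq)=\eucnorm{\vc}^2+G(\vq^{0})$ justifying the paper's ``it is easy to see,'' the gradient computation fixing $\lambda=2/\alpha$, and the recovery of \eqref{eq:cc1} from the projected equation via $\vc=\zero$ (which also shows why $\lambda\neq 0$ must be assumed in the converse) — is a careful filling-in of steps the paper compresses into its ``$\iff$'' and ``hence the theorem holds,'' not a different proof.
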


\begin{Remark}Since the function $r^{-\alpha} + r^2$ is convex on $(0,\infty)$, Theorem~\ref{theo:Utilde} implies that the restriction of~$\tilde U$ to each component of $Z^1$ minus collisions is convex for $d=1$, and hence one derives the existence (and uniqueness) of Moulton collinear central conf\/igurations.
\end{Remark}

\section{Hessians and indices}

Let $P\in \conf{n}{E}$ be a central conf\/iguration, with mass-norm $r=\mnorm{P}$. As in the case $r=1$, seen in \textbf{(CC1)}, it is a critical point of the restriction of the potential function $U$ to the inertia ellipsoid $S= \{ \vq \in \conf{n}{E} \colon \mnorm{\vq} = r \}$. As such, its Morse index is the Hessian of the restriction $U|_{S}$, which is a bilinear form def\/ined on the tangent space $T_P S$. The Hessian of $f=U|_{S}$ at a critical point $P\in S$ can be computed in general as $D^2f(P)[\vu(P),\vv(P)] = ((D_\vu D_\vv f - D_{D_\vu \vv}) f) (P) = (D_\vu (D_\vv f))(P) $, where $\vu $ and $\vv$ are vector f\/ields on~$S$ (see, e.g., \cite[formula~(1), p.~343]{Lang1999}). This yields the well-known formula of the Hessian in terms of second derivatives with respect to a~local chart (see also \cite[Proposition~2.8.8, p.~136]{MR3469182}). Given the mass-metric, the Hessian can be written as $D^2f[\vu,\vv] = \mscalar{ D_\vu \hatmgrad f }{ \vv }$ or as the self-adjoint endomorphism $T_PS \to T_PS$ def\/ined by $\vu \in T_PS \mapsto D_\vu \hatmgrad f(P) \in T_PS$, where $\hatmgrad$ denotes the gradient induced by the mass-metric restricted to~$S$ (hence $\hatmgrad f (P)$ is the projection of $\mgrad U(P)$ to $T_PS$, orthogonal with respect to $\mscalar{-}{-}$).

If $\vN$ denotes a vector normal to the tangent space $T_PS$ (such as $P-O$, where $O$ denotes the origin of the Euclidean space~$E$), the projection $\hatmgrad f(P)$ is equal to $\mgrad U - \frac{\mscalar{\mgrad U}{\vN}}{\mnorm{\vN}^2} \vN $ evaluated at $P$. If $\vN=P-O$, by Euler formula $\mscalar{\mgrad U}{\vN} = -\alpha U$, and because $P$ is a~critical point of $f=U|_S$ and $\vu \in T_PS$ the derivative $D_\vu \frac{\alpha U(\vq)}{\mnorm{\vq}^2}$ vanishes at~$P$, and hence
\begin{gather*}
D_\vu \hatmgrad f(P) =D_\vu \left(\mgrad U(\vq) + \frac{\alpha U(\vq)}{\mnorm{\vq}^2} \vN\right) (P) =
D_\vu (\mgrad U)(P)+ \frac{\alpha U(P)}{\mnorm{P}^2}D_\vu(\vq) \\
\hphantom{D_\vu \hatmgrad f(P)}{} = D_\vu\left(\mgrad U-\lambda\mgrad \frac{\mnorm{\vq}^2}{2}\right),
\end{gather*}
where $\lambda$ is as above the constant $-\frac{\alpha U(P)}{\mnorm{P}^2}$. Hence the following lemma holds.

\begin{Lemma}\label{propo:hessian1}
If $P$ is a critical point of the restriction $U|_{S_r}$, with the inertia ellipsoid $S_r =\{ \vq \in \conf{n}{E} \colon \mnorm{\vq} = r \}$ and with $\lambda$ defined as $\lambda = -\frac{\alpha U(P)}{\mnorm{P}^2} = -\frac{\alpha U(P)}{r^2}$, then~$P$ is a critical point of the function $U(\vq)-\frac{\lambda}{2} \mnorm{\vq}^2$; moreover, the Hessian of $U|_{S_r}$ at $P$ is the restriction to the tangent space $T_P{S_r}$ of the Hessian of the map $U(\vq)-\frac{\lambda}{2} \mnorm{\vq}^2$ defined on $\conf{n}{E}$, evaluated $P$.
\end{Lemma}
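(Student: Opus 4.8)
The plan is to derive both assertions from the Lagrange-multiplier characterization of constrained critical points, together with the gradient computation carried out immediately before the statement. Throughout I write $g(\vq) = U(\vq) - \frac{\lambda}{2}\mnorm{\vq}^2$ and use that, with respect to the mass-metric, $\mgrad \frac{1}{2}\mnorm{\vq}^2 = \vq$, so that $\mgrad g = \mgrad U - \lambda \vq$.

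For the first assertion I would argue that, since $S_r$ is a level set of $\vq \mapsto \mnorm{\vq}^2$, its normal line at $P$ is spanned by $\vN = P$, and that $P$ is a critical point of $f = U|_{S_r}$ precisely when $\mgrad U(P)$ is orthogonal to $T_P S_r$, i.e.\ $\mgrad U(P) = c\,\vN$ for some scalar $c$. Pairing with $\vN = P$ and invoking Euler's identity $\mscalar{\mgrad U(P)}{P} = -\alpha U(P)$ (valid because $U$ is homogeneous of degree $-\alpha$) forces $c = -\alpha U(P)/\mnorm{P}^2 = \lambda$. Hence $\mgrad g(P) = \mgrad U(P) - \lambda P = 0$, so $P$ is a critical point of $g$ on $\conf{n}{E}$.

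For the second assertion I would start from the formula $D^2 f(P)[\vu,\vv] = \mscalar{D_\vu \hatmgrad f(P)}{\vv}$ for the Hessian of $f = U|_{S_r}$ at the critical point $P$, valid for $\vu,\vv \in T_P S_r$; here the ambient derivative $D_\vu$ may be used in place of the intrinsic one, because the difference is a normal (second fundamental form) term that is annihilated upon pairing with the tangent vector $\vv$. The computation preceding the statement identifies $\hatmgrad f(P)$ with $\mgrad U + \frac{\alpha U}{\mnorm{\vq}^2}\vN$ and shows that, for $\vu \in T_P S_r$, one has $D_\vu \hatmgrad f(P) = D_\vu \mgrad g(P)$. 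The step I expect to be the crux is the vanishing of $D_\vu\big(\frac{\alpha U(\vq)}{\mnorm{\vq}^2}\big)$ at $P$: since $\mnorm{\vq}^2$ is constant on $S_r$, one has $D_\vu \mnorm{\vq}^2 = 0$ for $\vu \in T_P S_r$, so the quotient rule gives $D_\vu \frac{\alpha U}{\mnorm{\vq}^2} = \frac{\alpha\,D_\vu U(P)}{\mnorm{P}^2}$, and this is zero because $D_\vu U(P) = 0$ for tangent $\vu$, as $P$ is a critical point of $f$.

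Finally I would assemble the identities: for $\vu,\vv \in T_P S_r$,
\begin{gather*}
D^2 f(P)[\vu,\vv] = \mscalar{D_\vu \hatmgrad f(P)}{\vv} = \mscalar{D_\vu \mgrad g(P)}{\vv} = D^2 g(P)[\vu,\vv],
\end{gather*}
the last equality holding because $\conf{n}{E}$ is an open subset of a flat (mass-metric) Euclidean space, on which the Hessian of $g$ at $P$ is exactly the bilinear form $\mscalar{D_\vu \mgrad g}{\vv}$, and $P$ is a critical point of $g$ by the first assertion. This exhibits the Hessian of $U|_{S_r}$ at $P$ as the restriction to $T_P S_r$ of the Hessian of $g = U - \frac{\lambda}{2}\mnorm{\vq}^2$, as claimed. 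The only real difficulty is this bookkeeping — ensuring the normal components and the derivative of the scalar factor drop out in exactly the tangent directions — rather than any substantially new idea.
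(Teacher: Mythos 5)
Your proposal is correct and follows essentially the same route as the paper: you express the constrained gradient as $\hatmgrad f = \mgrad U + \frac{\alpha U(\vq)}{\mnorm{\vq}^2}\vN$ via Euler's identity, observe that $D_\vu\bigl(\frac{\alpha U(\vq)}{\mnorm{\vq}^2}\bigr)$ vanishes at $P$ for tangent $\vu$ (the paper's key step), and conclude $D_\vu \hatmgrad f(P) = D_\vu\bigl(\mgrad U - \lambda \mgrad \frac{\mnorm{\vq}^2}{2}\bigr)(P)$, exactly as in the computation the paper uses to establish the lemma. Your treatment is, if anything, slightly more explicit than the paper's on two points it leaves implicit: the Lagrange-multiplier argument showing $\mgrad U(P) = \lambda P$ (hence the first assertion), and the justification that the ambient derivative may replace the intrinsic one since the discrepancy is normal and is killed when paired with tangent vectors.
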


\begin{propo}\label{propo:dexter}
If $P\in \conf{n}{E}$ is a central configuration, then the Morse index at $P$ of the restriction $f=U|_{S_r}$ is equal to the Morse index at~$P$ of the function $F(\vq) = U(\vq)-\frac{\lambda}{2} \mnorm{\vq}^2$, where $\lambda$ and $S_r$ are as above. Furthermore, the direction parallel to $P-O$ is an eigenvector of the Hessian of~$F$, with (positive) eigenvalue equal to $-\lambda(\alpha+2)>0$.
\end{propo}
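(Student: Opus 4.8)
The plan is to prove both assertions at once by analysing the Hessian of $F$ as a self-adjoint endomorphism of the full (flat) tangent space $T_P\conf{n}{E}$ and then splitting it along the orthogonal decomposition $T_P\conf{n}{E} = T_PS_r \oplus \RR\vN$, where $\vN = P-O$ is the mass-metric normal to the inertia ellipsoid $S_r$. Since $\conf{n}{E}$ is an open subset of the linear space $C^0$, the covariant derivative coincides with the ordinary directional derivative, so at the critical point $P$ the Hessian of $F$ is represented by the self-adjoint endomorphism $\vu \mapsto D_\vu \mgrad F$, where $\mgrad$ denotes the mass-metric gradient.

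First I would compute $\mgrad F$. Because $\mgrad \tfrac12\mnorm{\vq}^2 = \vq$ in the mass metric, we have $\mgrad F = \mgrad U - \lambda\,\vq$, and at a central configuration equation~\eqref{eq:cc1} gives $\mgrad U(P) = \lambda\, P = \lambda\,\vN$, so $\mgrad F(P) = 0$, confirming that $P$ is critical (as in Lemma~\ref{propo:hessian1}). To obtain the eigenvector claim I would evaluate the endomorphism in the radial direction $\vu = \vN$. The key input is homogeneity: $U$ is homogeneous of degree $-\alpha$, hence the field $\mgrad U$ is homogeneous of degree $-\alpha-1$, and Euler's identity for a homogeneous vector field of degree $k$ (namely $D_\vq V = k\,V$) yields $D_\vN(\mgrad U) = -(\alpha+1)\,\mgrad U(P) = -(\alpha+1)\lambda\,\vN$. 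Together with $D_\vN(\vq) = \vN$ this gives
\[
D_\vN(\mgrad F) = D_\vN(\mgrad U) - \lambda\,\vN = -(\alpha+1)\lambda\,\vN - \lambda\,\vN = -\lambda(\alpha+2)\,\vN,
\]
so $\vN$ is an eigenvector with eigenvalue $-\lambda(\alpha+2)$; since $\lambda<0$ and $\alpha>0$ this eigenvalue is positive, which is the second assertion.

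With the eigenvector secured, the Morse-index comparison is linear algebra. The endomorphism $\vu\mapsto D_\vu\mgrad F$ is self-adjoint for $\mscalar{-}{-}$, and the line $\RR\vN$ is invariant by the previous step; hence its orthogonal complement, which is exactly $T_PS_r$, is also invariant, and the Hessian block-diagonalises along $T_P\conf{n}{E} = T_PS_r\oplus\RR\vN$. On the one-dimensional block $\RR\vN$ the form is positive (eigenvalue $-\lambda(\alpha+2)>0$), so it contributes $0$ to the Morse index of $F$. By Lemma~\ref{propo:hessian1} the restriction of this Hessian to $T_PS_r$ is precisely the Hessian of $f=U|_{S_r}$. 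Adding the two blocks, the Morse index of $F$ on $\conf{n}{E}$ equals the Morse index of $f$ on $S_r$, proving the first assertion.

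I expect the only delicate point to be the homogeneity computation $D_\vN(\mgrad U) = -(\alpha+1)\,\mgrad U(P)$: one must pass correctly between the Euclidean partials $\partial U/\partial\vq_j$ and the mass-metric gradient, for which $(\mgrad U)_j = m_j^{-1}\,\partial U/\partial\vq_j$, and verify that differentiating the degree-$(-\alpha-1)$ field $\mgrad U$ along the radial direction reproduces the degree as a scalar factor. Everything else, namely the self-adjointness and the resulting block splitting, is formal once this identity is in place.
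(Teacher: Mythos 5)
Your proof is correct and follows essentially the same route as the paper: the radial eigenvector comes from the Euler identity for the degree-$(-\alpha-1)$ homogeneous field $\mgrad U$, giving $D_\vN(\mgrad F) = -\lambda(\alpha+2)\vN$, and the index comparison then follows from Lemma~\ref{propo:hessian1}. The only difference is that you spell out the self-adjoint block-diagonalization of the Hessian along $T_PS_r \oplus \RR\vN$, which the paper leaves implicit.
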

\begin{proof}Since $\mgrad{U(\vq)}$ is homogeneous of degree $-\alpha-1$, if $\vN = P-O$ one has $D_\vN( \mgrad{U} )(P) = -(\alpha+1) \mgrad{U(P)} = -\lambda (\alpha+1) \vN$. Therefore
\begin{gather*}
D_\vN\left(\mgrad U-\lambda\mgrad \frac{\mnorm{\vq}^2}{2}\right) (P)=-\lambda (\alpha+1) \vN - \lambda \vN=-\lambda (\alpha+2) \vN.\tag*{\qed}
\end{gather*}
\renewcommand{\qed}{}
\end{proof}

Now, consider the function $f\colon C^1 \to \RR$ def\/ined on cochains in Theorem~\ref{theo:Utilde} as
\begin{gather*}
f(\vq) = \sum_{i<j} m_im_j\left( \eucnorm{\vq_{ij}}^{-\alpha} + \eucnorm{\vq_{ij}}^2 \right).
\end{gather*}
The following proposition links its Hessian with the Hessian of the function~$F$ of Proposition~\ref{propo:dexter}, for $\lambda = -2$.

\begin{propo}\label{propo:hessian2}
Let $\vq\in\conf{n}{E}$ be a central configuration which is a critical point of the function $F(\vq) = U(\vq) + \mnorm{\vq}^2$ in $C^0$ $($and hence $\vq\in X)$. Let $H$ be the Hessian of $F$ at~$\vq$ $($with respect to the mass-metric in~$C^0)$, and~$ \tilde H$ the Hessian matrix of the composition $f \circ P_m $ at $\delta^0(\vq)\in Z^1 \subset C^1$ $($with respect to the mass-metric in~$C^1)$. Then the non-zero eigenvalues of~$\tilde H$ are the same as the non-zero eigenvalues of~$H$, except for the eigenvalue~$2$ occurring in~$H$ with multiplicity $\dim E$ $($which corresponds to the group of translations in~$E$, or equivalently the orthogonal complement of~$X$ in~$C^0)$.
\end{propo}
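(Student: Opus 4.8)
The plan is to exploit two orthogonal direct-sum decompositions and the observation that both $F$ and $f\circ P_m$ split as a sum of a reduced part and a trivial part along them. In $C^0$ I would write $C^0 = X \oplus X^\perp$, orthogonal for the mass-metric; a direct check shows $X^\perp$ is exactly the space of translations $(\vc,\ldots,\vc)$, so $\dim X^\perp = \dim E$. In $C^1$, since $P_m$ is the orthogonal projection onto $Z^1$ shown above, I have the orthogonal decomposition $C^1 = Z^1 \oplus \ker P_m$. Because both splittings are orthogonal, a Hessian that is block-diagonal along them corresponds to a block-diagonal self-adjoint endomorphism whose spectrum is the union of the spectra of the blocks; this is what lets me read off eigenvalues block by block.

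First I would analyze $H$. Since $U$ depends only on the differences $\vq_i - \vq_j$ it is translation-invariant, so writing $\vq = \vp + \vt$ with $\vp \in X$ and $\vt \in X^\perp$ gives $F(\vp+\vt) = U(\vp) + \mnorm{\vp}^2 + \mnorm{\vt}^2$, with no mixed $\vp$–$\vt$ terms. At the critical point $\vq \in X$ (so $\vt = \zero$) the Hessian $H$ is therefore block-diagonal: on $X$ it is the Hessian of $F|_X$, and on $X^\perp$ it is the Hessian of $\vt \mapsto \mnorm{\vt}^2$, namely $2\,\mathrm{Id}$. Hence $H$ contributes the eigenvalue $2$ with multiplicity $\dim X^\perp = \dim E$ (the translation directions), together with the eigenvalues of the Hessian of $F|_X$.

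Next I would analyze $\tilde H$. Because $P_m$ is linear its second derivative vanishes, so the chain rule gives $D^2(f\circ P_m)(\vw)[\vu,\vv] = D^2 f(P_m \vw)[P_m\vu, P_m\vv]$. Evaluating at $\delta^0(\vq) \in Z^1$, where $P_m$ acts as the identity, $\tilde H$ as a bilinear form equals $D^2 f(\delta^0\vq)[P_m\vu, P_m\vv]$: restricted to $Z^1$ it is the Hessian of $f|_{Z^1}$ (with no second-fundamental-form correction, since $Z^1$ is a linear subspace), while it annihilates $\ker P_m$ and all cross terms vanish. Thus $\tilde H$ contributes the eigenvalue $0$ with multiplicity $\dim \ker P_m$ together with the eigenvalues of the Hessian of $f|_{Z^1}$.

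Finally I would match the two reduced blocks. From $\tilde U\circ\delta^0 = U$ and the isometry $\mnorm{\delta^0\vq} = \mnorm{\vq}$ on $X$, one has $F|_X = f|_{Z^1}\circ\delta^0|_X$, and $\delta^0|_X\colon X \to Z^1$ is a \emph{linear isometry} for the two mass-metrics; in particular critical points correspond. Therefore the self-adjoint endomorphism representing the Hessian of $F|_X$ at $\vq$ and the one representing the Hessian of $f|_{Z^1}$ at $\delta^0\vq$ are conjugate by this isometry, hence have the same eigenvalues. Combining the three steps, the non-zero eigenvalues of $\tilde H$ are exactly the non-zero eigenvalues of the Hessian of $F|_X$, which are precisely the non-zero eigenvalues of $H$ after deleting the $\dim E$ copies of the eigenvalue $2$ coming from translations. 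The main thing to get right is the third step together with the differentiation through $P_m$: one must check that composing with $P_m$ really yields a clean block structure with $\ker P_m$ inside the kernel of $\tilde H$ and no cross terms, and that the orthogonality of both splittings is exactly what upgrades ``block-diagonal bilinear form'' to ``direct sum of spectra'' for the associated self-adjoint operators.
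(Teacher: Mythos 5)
Your proposal is correct and follows essentially the same route as the paper's proof: the orthogonal splitting of $C^0$ into $X$ and the translation directions (eigenvalue $2$ with multiplicity $\dim E$), the matching of the reduced spectra via the linear isometry $\delta^0_{|X}\colon X \to Z^1$, and the observation that $\ker P_m$ contributes only zero eigenvalues to $\tilde H$. Your write-up is in fact more explicit than the paper's (the chain rule through the linear projection $P_m$, the vanishing cross terms, and the absence of a second-fundamental-form correction on the linear subspace $Z^1$ are left implicit there), but these are elaborations of the same argument.
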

\begin{proof} Since $U$ is invariant with respect to translations in $C^0$, $H$ has the autospace $\vq_1=\vq_2=\dots=\vq_n \subset C^0$ (which is the tangent space of the group of translations acting on $\conf{n}{E}$, and is orthogonal to $X$ with respect to the mass-metric), over which $D^2U$ vanishes and $D^2 \mnorm{\vq}^2 = 2$; hence it is an eigenspace with eigenvalue~$2$. The rest of eigenvalues of~$H$ correspond via the isometric embedding $\delta^0_{|X}$ to eigenvalues of the restriction of $f$ to $Z^1$, and hence to the eigenvalues in $Z^1$ of the composition $f \circ P_m$. The orthogonal complement of $Z^1$, which is the kernel of~$P_m$, yields zero eigenvalues to $\tilde H$.
\end{proof}

\section{Simple proofs of some corollaries}

Equations \eqref{eq:qij} can be written as the following:
\begin{gather}\label{eq:qij2}
\frac{\lambda}{\alpha} \vq_{ij} +\vQ_{ij} =\sum_{k\not\in\{i,j\}} m_k (\vQ_{ij} + \vQ_{jk} + \vQ_{ki} ).
\end{gather}

Now, consider for each triple $i$, $j$, $k$ the corresponding term $\vQ_{ijk} = \vQ_{ij} + \vQ_{jk} + \vQ_{ki}$. We give some very simple proofs to some well-known propositions (actually generalizing them to any homogeneity $\alpha$), that follow from the following simple geometric lemma.

\begin{Lemma}\label{lemma:ABC}
Let $\vq_1$, $\vq_2$ and $\vq_3$ be three non-collinear points in $E$. Then $\vQ_{123} = \zero$ if and only if $\vq_1$, $\vq_2$ and $\vq_3$ are vertices of an equilateral triangle.

Furthermore, there exists $c\in \RR$ such that $\vQ_{123} = c \vq_{12}$ if and only if $\eucnorm{\vq_{13}} = \eucnorm{\vq_{23}}$, that is, if and only if the triangle with vertices in $\vq_1$, $\vq_2$ and $\vq_3$ is isosceles in $\vq_3$.
\end{Lemma}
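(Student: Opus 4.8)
The plan is to exploit the single linear relation satisfied by the three edge vectors and thereby reduce both claims to an elementary two-dimensional linear-algebra statement. First I would record the identity
\[
\vq_{12}+\vq_{23}+\vq_{31}=\zero,
\]
which is immediate from $\vq_{ij}=\vq_i-\vq_j$ and is nothing but the fact that the difference cochain, being a coboundary, is annihilated by $\delta^1$ (indeed $\delta^1(\vq)(123)=\vq_{12}+\vq_{23}+\vq_{31}$). Setting $w_{ij}=\eucnorm{\vq_{ij}}^{-(\alpha+2)}>0$, so that $\vQ_{ij}=w_{ij}\vq_{ij}$, I have $\vQ_{123}=w_{12}\vq_{12}+w_{23}\vq_{23}+w_{31}\vq_{31}$.

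Next I would invoke the non-collinearity hypothesis, which is exactly the assertion that $\vq_{12}$ and $\vq_{23}$ are linearly independent and hence form a basis of the plane they span. Eliminating $\vq_{31}=-\vq_{12}-\vq_{23}$ gives the expansion in this basis
\[
\vQ_{123}=(w_{12}-w_{31})\,\vq_{12}+(w_{23}-w_{31})\,\vq_{23}.
\]
For the first claim, $\vQ_{123}=\zero$ forces both coefficients to vanish, i.e.\ $w_{12}=w_{31}$ and $w_{23}=w_{31}$; since the map $r\mapsto r^{-(\alpha+2)}$ is strictly monotone on $(0,\infty)$ for $\alpha>0$, this is equivalent to $\eucnorm{\vq_{12}}=\eucnorm{\vq_{23}}=\eucnorm{\vq_{31}}$, that is, to the triangle being equilateral. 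The converse is immediate: if all three lengths coincide, then $\vQ_{123}=w\,(\vq_{12}+\vq_{23}+\vq_{31})=\zero$.

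For the second claim I would read off from the same expansion that $\vQ_{123}=c\,\vq_{12}$ for some $c\in\RR$ if and only if the $\vq_{23}$-coefficient vanishes, i.e.\ $w_{23}=w_{31}$; by the same monotonicity this is equivalent to $\eucnorm{\vq_{23}}=\eucnorm{\vq_{31}}=\eucnorm{\vq_{13}}$, which is precisely the condition that the triangle be isosceles at $\vq_3$. There is no genuine obstacle in this argument; the only two points deserving a little care are the translation of ``non-collinear'' into linear independence of the chosen pair of edge vectors (so that vanishing of a linear combination is equivalent to vanishing of each coefficient), and the strict monotonicity of $r\mapsto r^{-(\alpha+2)}$, which is what allows one to pass from equality of the weights $w_{ij}$ back to equality of the corresponding side lengths.
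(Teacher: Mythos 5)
Your proof is correct and takes essentially the same route as the paper: both arguments rest on the cocycle relation $\vq_{12}+\vq_{23}+\vq_{31}=\zero$ together with non-collinearity (so that, up to scale, this is the \emph{only} linear relation among the edge vectors), and then pass from equality of the weights $\eucnorm{\vq_{ij}}^{-(\alpha+2)}$ to equality of side lengths. Your explicit elimination of $\vq_{31}$ and expansion in the basis $\{\vq_{12},\vq_{23}\}$ is just a more concrete rendering of the paper's ``barycentric coordinates'' step, which asserts that any vanishing combination $c_3\vq_{12}+c_1\vq_{23}+c_2\vq_{31}=\zero$ forces $c_1=c_2=c_3$.
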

\begin{proof}If $\vq_1$, $\vq_2$ and $\vq_3$ are not collinear (in~$E$), then the dif\/ferences $\vq_{12}$, $\vq_{13}$ and $\vq_{23}$ generate a plane. Since $\vq_{12} + \vq_{23} + \vq_{31} = \zero$, $\vQ_{123}=\zero$ implies
\begin{gather*}
\vQ_{123} = \vQ_{12} + \vQ_{23} + \vQ_{31} =\frac{\vq_{12}}{\eucnorm{\vq_{12}}^{\alpha+2}} +\frac{\vq_{23}}{\eucnorm{\vq_{23}}^{\alpha+2}} +
\frac{\vq_{31}}{\eucnorm{\vq_{31}}^{\alpha+2}}
 = \zero = \vq_{12} + \vq_{23} +\vq_{31} .
\end{gather*}
By taking barycentric coordinates in the plane generated by the three points, it follows that $\vQ_{123} = \zero$ if and only if $\eucnorm{\vq_{12}}^{\alpha+2} = \eucnorm{\vq_{23}}^{\alpha+2} = \eucnorm{\vq_{31}}^{\alpha+2}$, that is, if and only if the three points are vertices of an equilateral triangle.

If $c_1$, $c_2$ and $c_3$ are three non-zero real numbers such that $c_3 \vq_{12} + c_1 \vq_{23} + c_2 \vq_{31} = c \vq_{12}$, then $(c_3-c) \vq_{12} + c_1 \vq_{23} + c_2 \vq_{31} = \zero $, and as above this implies $c_3-c=c_1=c_2$. Hence if $\vQ_{123} = c \vq_{12}$, it holds that $\eucnorm{\vq_{23}}^{\alpha+2} = \eucnorm{\vq_{13}}^{\alpha+2} $ as claimed.
\end{proof}

\begin{coro}
For $n= 3$, $d\geq 2$ and $\alpha>0$, the only non-collinear central configuration is the equilateral configuration.
\end{coro}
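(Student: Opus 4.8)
The plan is to specialize the reformulated central-configuration equation \eqref{eq:qij2} to $n=3$ and then read off the geometry directly from Lemma~\ref{lemma:ABC}. For $n=3$ each pair $ij$ admits a \emph{unique} index $k\notin\{i,j\}$, so the sum on the right-hand side of \eqref{eq:qij2} collapses to a single term. Writing the equation for the three cyclically ordered edges $12$, $23$, $31$, I expect to obtain, for each cyclic permutation $(i,j,k)$ of $(1,2,3)$,
\begin{gather*}
\frac{\lambda}{\alpha}\vq_{ij} + \vQ_{ij} = m_k \vQ_{123},
\end{gather*}
where $\vQ_{123}=\vQ_{12}+\vQ_{23}+\vQ_{31}$ is the same cyclically-invariant quantity appearing in all three equations.

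The key observation is that the left-hand side of each equation is a \emph{scalar multiple} of $\vq_{ij}$: since $\vQ_{ij}=\Psi(\vq_{ij})=\vq_{ij}/\eucnorm{\vq_{ij}}^{\alpha+2}$ is parallel to $\vq_{ij}$, one has $\frac{\lambda}{\alpha}\vq_{ij}+\vQ_{ij}=\bigl(\frac{\lambda}{\alpha}+\eucnorm{\vq_{ij}}^{-\alpha-2}\bigr)\vq_{ij}$. Dividing by $m_k>0$, I conclude that $\vQ_{123}$ lies on the line spanned by $\vq_{ij}$, simultaneously for every edge $ij$.

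Next I would invoke non-collinearity. If $\vq_1,\vq_2,\vq_3$ are not collinear, then no two of the difference vectors $\vq_{12},\vq_{23},\vq_{31}$ are parallel, so any two of them are linearly independent and span the plane through the three points. A vector lying on two distinct lines through the origin must be $\zero$; hence $\vQ_{123}=\zero$. Lemma~\ref{lemma:ABC} then forces the triangle $\vq_1\vq_2\vq_3$ to be equilateral, which is the assertion. For completeness one records the converse: for an equilateral triangle all edge lengths coincide, so $\vQ_{123}=\zero$ and \eqref{eq:qij2} holds with the common value $\frac{\lambda}{\alpha}=-\eucnorm{\vq_{ij}}^{-\alpha-2}$, confirming that the equilateral configuration is indeed a central configuration (for any masses).

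I do not expect a genuine obstacle here. The only points requiring care are the cyclic and sign bookkeeping when collapsing the sum in \eqref{eq:qij2} to the single term $m_k\vQ_{123}$, and the (elementary) remark that non-collinearity is exactly what guarantees pairwise linear independence of the edge vectors. The apparently degenerate case $\frac{\lambda}{\alpha}+\eucnorm{\vq_{ij}}^{-\alpha-2}=0$ is harmless, since it makes the left-hand side vanish and yields $\vQ_{123}=\zero$ at once.
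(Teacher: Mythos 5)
Your proposal is correct and follows essentially the same route as the paper: specialize \eqref{eq:qij2} to $n=3$, observe that the left-hand side is a scalar multiple of $\vq_{ij}$ while the right-hand side is $m_k$ times the cyclically invariant $\vQ_{123}$, conclude $\vQ_{123}=\zero$ for a non-collinear configuration, and apply Lemma~\ref{lemma:ABC}. The paper states this as the contrapositive ($\vQ_{123}\neq\zero$ forces all three edge vectors to be parallel, hence collinearity), but the argument is identical; your closing remark on the converse (that the equilateral triangle is indeed central for any masses) is a harmless addition beyond what the paper proves.
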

\begin{proof}
Equation \eqref{eq:qij2} implies that, if $\vQ_{ijk} \neq \zero$, then the conf\/iguration is collinear (since \eqref{eq:qij2} implies there exist three real numbers $c_{12}$, $c_{23}$, $c_{31}$ such that $c_{12}\vq_{12} = m_3 \vQ_{123}$, $c_{23}\vq_{23} = m_1 \vQ_{231}$, $c_{31}\vq_{31} = m_2 \vQ_{312}$, and it is easy to see that $\vQ_{123} = \vQ_{231} = \vQ_{312}$). Therefore, if the conf\/iguration is not collinear, $\vQ_{ijk} = \zero$, and by Lemma~\ref{lemma:ABC} the conf\/iguration is an equilateral triangle.
\end{proof}

Another easy consequence of Lemma~\ref{lemma:ABC} is the following proposition (see \cite{MR3428762,mb1932, MR2052612} for its importance in estimating the number non-degenerate planar central conf\/igurations of four bodies).

\begin{coro}\label{coro:bartky}
For $n=4$, $d\geq 2$ and $\alpha>0$, if a central configuration has three collinear bodies, then it is a collinear configuration.
\end{coro}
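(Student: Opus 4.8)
The plan is to argue by contradiction, exploiting the fact that for $n=4$ the sum in equation~\eqref{eq:qij2} has exactly two terms. Relabelling if necessary, let $\vq_1,\vq_2,\vq_3$ be the three collinear bodies, lying on a line $\ell$, and suppose the remaining body $\vq_4$ does \emph{not} lie on $\ell$. My goal is to show that the central configuration equations then force $\vq_4$ to be equidistant from $\vq_1$, $\vq_2$ and $\vq_3$, which is geometrically impossible for three distinct collinear points.

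First I would exploit the parallelism built into the collinear triple. Since $\vq_1,\vq_2,\vq_3\in\ell$, every difference $\vq_{ij}$ with $i,j\in\{1,2,3\}$ is parallel to $\ell$, and hence so is $\vQ_{ij}=\frac{\vq_{ij}}{\eucnorm{\vq_{ij}}^{\alpha+2}}$; in particular $\vQ_{ijk}$ is parallel to $\ell$ whenever $i,j,k\in\{1,2,3\}$. Now write equation~\eqref{eq:qij2} for a pair $\{i,j\}\subset\{1,2,3\}$: the right-hand side is $m_{k_0}\vQ_{ijk_0}+m_4\vQ_{ij4}$, where $k_0$ is the third element of $\{1,2,3\}$. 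The left-hand side $\frac{\lambda}{\alpha}\vq_{ij}+\vQ_{ij}$ and the term $m_{k_0}\vQ_{ijk_0}$ are both parallel to $\ell$, so subtracting leaves $m_4\vQ_{ij4}$ parallel to $\ell$; since $m_4>0$ this gives $\vQ_{ij4}\parallel\ell$ for each of the three pairs $\{1,2\}$, $\{1,3\}$, $\{2,3\}$.

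Next I would invoke Lemma~\ref{lemma:ABC}. Because $\vq_4\notin\ell$ while $\vq_i,\vq_j\in\ell$ with $\vq_i\neq\vq_j$, each triple $\vq_i,\vq_j,\vq_4$ is non-collinear, so the lemma applies to the triangle with vertices $\vq_i,\vq_j,\vq_4$. As $\vq_{ij}$ is a nonzero vector spanning $\ell$, the relation $\vQ_{ij4}\parallel\ell$ means $\vQ_{ij4}=c\,\vq_{ij}$ for some $c\in\RR$; by the second part of the lemma (with $\vq_4$ playing the role of the apex) this forces $\eucnorm{\vq_{i4}}=\eucnorm{\vq_{j4}}$. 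Running this over the three pairs yields $\eucnorm{\vq_{14}}=\eucnorm{\vq_{24}}=\eucnorm{\vq_{34}}$, i.e., $\vq_4$ is equidistant from $\vq_1,\vq_2,\vq_3$.

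The final step, which I regard as the crux that closes the argument, is the geometric impossibility. The locus of points equidistant from $\vq_1$ and $\vq_2$ is the perpendicular-bisector hyperplane of that segment, with normal direction $\vq_1-\vq_2$ (parallel to $\ell$) through the midpoint; similarly for $\vq_2,\vq_3$. These two hyperplanes share the same normal direction but pass through distinct points of $\ell$, since the midpoints of $\vq_1\vq_2$ and $\vq_2\vq_3$ differ (as $\vq_1\neq\vq_3$); they are therefore parallel and disjoint, so no common equidistant point exists in any dimension $d\geq 2$. This contradicts the previous step, and hence $\vq_4\in\ell$, so the configuration is collinear. The only subtlety to keep in mind is that the non-degeneracy of all three auxiliary triangles (needed to apply Lemma~\ref{lemma:ABC}) is exactly what the assumption $\vq_4\notin\ell$ provides, and that the impossibility is a statement about parallel hyperplanes valid for every $d\geq 2$, not merely in the plane.
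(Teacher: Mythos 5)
Your proposal is correct and follows essentially the same route as the paper's own proof: apply equation~\eqref{eq:qij2} to the three pairs within the collinear triple, isolate $m_4\vQ_{ij4}$ as a multiple of $\vq_{ij}$, invoke the isosceles part of Lemma~\ref{lemma:ABC} to get $\eucnorm{\vq_{14}}=\eucnorm{\vq_{24}}=\eucnorm{\vq_{34}}$, and conclude by the impossibility of a point equidistant from three distinct collinear points. The only difference is that you spell out the final impossibility via parallel perpendicular-bisector hyperplanes, a detail the paper leaves implicit with ``which is not possible since $\vq_1$, $\vq_2$ and $\vq_3$ are collinear.''
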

\begin{proof}
Assume that $\vq_1$, $\vq_2$ and $\vq_3$ are collinear, and $\vq_4$ is not. Then, equation \eqref{eq:qij2} implies that for suitable real numbers $c_{12}$, $c_{23}$ and $c_{31}$ the following equations hold:
\begin{gather*}
c_{12} \vq_{12} = m_3 \vQ_{123} + m_4 \vQ_{124}, \qquad c_{23} \vq_{23} = m_1 \vQ_{231} + m_4 \vQ_{234},\\
c_{31} \vq_{31} = m_2 \vQ_{312} + m_4 \vQ_{314}.
\end{gather*}
This implies that there are $\tilde c_{12}$, $\tilde c_{23}$ and $\tilde c_{31}$
such that
\begin{gather*}
\vQ_{124} = \tilde c_{12} \vq_{12}, \qquad \vQ_{234} = \tilde c_{23} \vq_{23}, \qquad \vQ_{314} = \tilde c_{31} \vq_{31},
\end{gather*}
and by Lemma~\ref{lemma:ABC} this implies that $\eucnorm{\vq_{14}}=\eucnorm{\vq_{24}} = \eucnorm{\vq_{34}}$, which is not possible since $\vq_1$, $\vq_2$ and~$\vq_3$ are collinear.
\end{proof}

Corollary \ref{coro:bartky} can be easily generalized to arbitrary $n$ as follows:
\begin{coro}\label{coro:bartky2}
For $n\geq 4$, $d\geq 2$ and $\alpha>0$, if $n-1$ of the bodies in the central configuration are collinear, then all of them are.
\end{coro}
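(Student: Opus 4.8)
The plan is to repeat the argument of Corollary~\ref{coro:bartky} almost verbatim, the only difference being that equation~\eqref{eq:qij2} now carries extra summands which will all turn out to be parallel to the common line. So suppose, aiming at a contradiction, that $\vq_1,\ldots,\vq_{n-1}$ lie on a line $L$ while $\vq_n\notin L$; the hypothesis $d\ge 2$ is what makes such a configuration possible. Then every difference $\vq_{ij}$ with $i,j\in\{1,\ldots,n-1\}$, and hence every $\vQ_{ij}=\vq_{ij}/\eucnorm{\vq_{ij}}^{\alpha+2}$ with such indices, is parallel to $L$.

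First I would fix a pair $i,j$ with $i,j\le n-1$ and rewrite equation~\eqref{eq:qij2}, splitting the sum over $k\not\in\{i,j\}$ into the indices $k\le n-1$ and the single index $k=n$. Isolating the last term gives
\begin{gather*}
m_n\vQ_{ijn}=\frac{\lambda}{\alpha}\vq_{ij}+\vQ_{ij}-\sum_{k\le n-1,\ k\not\in\{i,j\}} m_k\,\vQ_{ijk}.
\end{gather*}
On the right-hand side the term $\frac{\lambda}{\alpha}\vq_{ij}+\vQ_{ij}$ is a scalar multiple of $\vq_{ij}$, and each $\vQ_{ijk}=\vQ_{ij}+\vQ_{jk}+\vQ_{ki}$ with $k\le n-1$ is a sum of vectors parallel to $L$; hence the entire right-hand side is parallel to $L$. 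Therefore $\vQ_{ijn}=\vQ_{ij}+\vQ_{jn}+\vQ_{ni}$ is parallel to $L$, i.e.\ there is $\tilde c_{ij}\in\RR$ with $\vQ_{ijn}=\tilde c_{ij}\,\vq_{ij}$.

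Next, since $\vq_n\notin L$, for any two distinct indices $i,j\le n-1$ the points $\vq_i,\vq_j,\vq_n$ are non-collinear, so Lemma~\ref{lemma:ABC} applies to this triangle and the relation $\vQ_{ijn}=\tilde c_{ij}\vq_{ij}$ forces $\eucnorm{\vq_{in}}=\eucnorm{\vq_{jn}}$. Letting $i,j$ range over all pairs in $\{1,\ldots,n-1\}$, I obtain $\eucnorm{\vq_{1n}}=\eucnorm{\vq_{2n}}=\cdots=\eucnorm{\vq_{(n-1)n}}$, that is, $\vq_n$ is equidistant from all $n-1$ collinear points.

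Finally I would reach a contradiction from this equidistance, and this is where $n\ge 4$ enters: there are then at least three distinct collinear points among $\vq_1,\ldots,\vq_{n-1}$, and the perpendicular bisector hyperplanes of two of the resulting segments are distinct and parallel (both orthogonal to $L$), so they cannot meet; thus no point can be equidistant from three distinct points of $L$, contradicting the equidistance of $\vq_n$. Hence $\vq_n\in L$. I do not expect a genuine obstacle: the only step needing care is checking that, once the $k=n$ term is isolated, everything remaining lies on $L$, and after that the argument is the same elementary geometry (Lemma~\ref{lemma:ABC} together with the equidistance fact) used for the case $n=4$.
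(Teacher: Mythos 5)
Your proof is correct and is exactly the generalization the paper intends: the paper states Corollary~\ref{coro:bartky2} without proof as an easy extension of Corollary~\ref{coro:bartky}, and your argument (isolating the $k=n$ term in equation~\eqref{eq:qij2}, noting that all remaining terms are parallel to the common line, then applying Lemma~\ref{lemma:ABC} to get equidistance of $\vq_n$ from at least three distinct collinear points) mirrors the paper's $n=4$ proof step for step, while also spelling out the final perpendicular-bisector contradiction that the paper leaves implicit.
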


\begin{coro} For $n\geq 4$, $d\geq 3$ and $\alpha>0$, if the first $n-1$ bodies $\vq_1, \ldots, \vq_{n-1}$ in a central configuration belong to a plane $\pi\subset E$, and the $n$-th body $\vq_n$ does not belong to the plane $\pi$, then the distance between $\vq_n$ and any $\vq_j$ does not depend on $j=1,\ldots, n-1$, i.e., there exists $c>0$ such that $\eucnorm{\vq_n - \vq_j} = c$ for all $j<n$. Hence the $n-1$ coplanar bodies are cocircular.
\end{coro}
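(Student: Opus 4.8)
The plan is to mimic the strategy of the preceding corollary (Corollary~\ref{coro:bartky}) but now applied to the higher-dimensional configuration in which $n-1$ bodies are \emph{coplanar} rather than collinear. The key observation is that equation~\eqref{eq:qij2}, written for the pair $ij$ with $1\leq i<j\leq n-1$, decomposes the combination $\frac{\lambda}{\alpha}\vq_{ij}+\vQ_{ij}$ into a sum over $k\not\in\{i,j\}$ of the triangle terms $\vQ_{ijk}=\vQ_{ij}+\vQ_{jk}+\vQ_{ki}$. I would split this sum into the part coming from $k\le n-1$ (the planar indices) and the single term $k=n$ (the off-plane body~$\vq_n$). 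Since $\vq_i,\vq_j$ and the other planar bodies $\vq_k$ all lie in the plane~$\pi$, every difference $\vq_{ij},\vq_{jk},\vq_{ki}$ lies in the plane direction, and therefore each $\vQ$-vector does too. Hence all terms in~\eqref{eq:qij2} with indices in $\{1,\ldots,n-1\}$, including $\frac{\lambda}{\alpha}\vq_{ij}$ and $\vQ_{ij}$, are vectors parallel to~$\pi$.

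The crucial step is the component of the equation normal to $\pi$. Write $E\cong \pi \oplus \pi^\perp$ and project~\eqref{eq:qij2} onto $\pi^\perp$. On the left-hand side everything is parallel to~$\pi$ and so vanishes under this projection; on the right-hand side the only surviving contribution is $m_n\,\vQ_{ijn}$, since every triangle $ijk$ with $k\le n-1$ is entirely contained in~$\pi$ and contributes nothing to~$\pi^\perp$. Thus the $\pi^\perp$-component of $\vQ_{ijn}=\vQ_{ij}+\vQ_{jn}+\vQ_{ni}$ must be zero for each such pair $ij$. I then argue, exactly as in Corollary~\ref{coro:bartky}, that combined with the in-plane parts this forces $\vQ_{ijn}=\tilde c_{ij}\,\vq_{ij}$ for some real scalar $\tilde c_{ij}$, i.e.\ the triangle term is a multiple of the in-plane edge~$\vq_{ij}$. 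At this point the triangle $\vq_i,\vq_j,\vq_n$ is non-collinear (because $\vq_n\notin\pi$ while $\vq_i,\vq_j\in\pi$), so Lemma~\ref{lemma:ABC} applies and yields $\eucnorm{\vq_{in}}=\eucnorm{\vq_{jn}}$.

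Running this for all pairs $i<j$ in $\{1,\ldots,n-1\}$ gives $\eucnorm{\vq_n-\vq_i}=\eucnorm{\vq_n-\vq_j}$ for every $i,j<n$, so there is a common value $c=\eucnorm{\vq_n-\vq_j}>0$ independent of~$j$. Finally, the condition that all $n-1$ planar points are equidistant from the fixed off-plane point $\vq_n$ means they lie on the intersection of~$\pi$ with the sphere of radius~$c$ centered at~$\vq_n$; this intersection is a circle (the foot of~$\vq_n$ on~$\pi$ is its center, with radius $\sqrt{c^2-\operatorname{dist}(\vq_n,\pi)^2}$), so the coplanar bodies are cocircular. The main obstacle I anticipate is the second step: carefully justifying that projecting onto $\pi^\perp$ kills all triangle terms except the one containing~$n$, and then recovering the full relation $\vQ_{ijn}=\tilde c_{ij}\vq_{ij}$ in the plane spanned by $\vq_i,\vq_j,\vq_n$ so that Lemma~\ref{lemma:ABC} is genuinely applicable; everything else is bookkeeping with~\eqref{eq:qij2}.
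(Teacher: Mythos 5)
Your proposal is correct and follows essentially the same route as the paper: the same splitting of the sum in~\eqref{eq:qij2} into the in-plane terms ($k\le n-1$) and the single term $m_n\vQ_{ijn}$, the same geometric conclusion that $\vQ_{ijn}$ must lie in the intersection of the direction of $\pi$ with the direction of the plane through $\vq_i,\vq_j,\vq_n$ (hence is a multiple of $\vq_{ij}$), and the same application of the second part of Lemma~\ref{lemma:ABC} to get $\eucnorm{\vq_{in}}=\eucnorm{\vq_{jn}}$ for all pairs. Your phrasing via orthogonal projection onto $\pi^\perp$ is just a slightly more explicit rendering of the paper's "parallel to both planes" argument, so the two proofs coincide in substance.
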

\begin{proof}
For each $i,j\leq n-1$ there exists $c_{ij} \in \RR$ such that
\begin{gather*}
c_{ij} \vq_{ij} = \sum_{k\not\in\{i,j\}} m_k \vQ_{ijk} = \sum_{k\not\in\{i,j,n\}} m_k \vQ_{ijk}+ m_n \vQ_{ijn}.
\end{gather*}
The term $\sum\limits_{k\not\in\{i,j,n\}} m_k \vQ_{ijk}$ is parallel to the plane $\pi$, while the sum $c_{ij} \vq_{ij} - m_n \vQ_{ijn}$ is a vector parallel to the plane containing $\vq_i$, $\vq_j$ and $\vq_n$. Being equal, they both need to be parallel to both planes, and hence they are multiples of~$\vq_{ij}$. Therefore, by Lemma~\ref{lemma:ABC}, there exists $\tilde c_{ij}\in \RR$ such that $\vQ_{ijn} = \tilde c_{ij} \vq_{ij}$, and as above this implies that $\eucnorm{\vq_{in}} = \eucnorm{\vq_{jn}}$.
\end{proof}

Pyramidal conf\/igurations for $d=3$ and $\alpha=1$ were studied in f\/irst~\cite{MR1301024}; see also~\cite{MR2108877} for applications to perverse solutions and for the value of the constant~$c$.

\subsection*{Acknowledgements}
Work partially supported by the project ERC Advanced Grant 2013 n.~339958 ``Complex Patterns for Strongly Interacting Dynamical Systems COMPAT''.

\pdfbookmark[1]{References}{ref}
\LastPageEnding

\end{document}